\numberwithin{equation}{section}
\numberwithin{figure}{section}
  \theoremstyle{plain}
  \newtheorem{thm}{\protect\theoremname}[section]
  \theoremstyle{definition}
  \newtheorem{example}{\protect\examplename}[section]
 \theoremstyle{definition}
 \newtheorem*{defn*}{\protect\definitionname}
  \theoremstyle{plain}
  \newtheorem*{thm*}{\protect\theoremname}
  \theoremstyle{plain}
  \newtheorem{lem}{\protect\lemmaname}[section]
  \providecommand{\definitionname}{Definition}
  \providecommand{\examplename}{Example}
  \providecommand{\lemmaname}{Lemma}
  \providecommand{\theoremname}{Theorem}
\providecommand{\theoremname}{Theorem}
\begin{document}

\title{Closed-form formulas for the distribution of the jumps of doubly-stochastic
Poisson processes}

\author{Arturo Valdivia\thanks{Universitat de Barcelona, Gran Via de les Corts Catalanes, 585, E-08007
Barcelona, Spain. E-mail\texttt{: arturo@valdivia.xyz}}}
\maketitle
\begin{abstract}
We study the obtainment of closed-form formulas for the distribution
of the jumps of a doubly-stochastic Poisson process. The problem is
approached in two ways. On the one hand, we translate the problem
to the computation of multiple derivatives of the Hazard process cumulant
generating function; this leads to a closed-form formula written in
terms of Bell polynomials. On the other hand, for Hazard processes
driven by L\'evy processes, we use Malliavin calculus in order to
express the aforementioned distributions in an appealing recursive
manner. We outline the potential application of these results in credit
risk.\\
\textbf{}\\
\textbf{Keywords: }doubly-stochastic Poisson process; Bell polynomials;
Malliavin calculus; Credit risk; Hazard process; Integrated non-Gaussian
OU process.\\
\textbf{AMS MSC 2010:} 60G22, 60G51, 60H07, 91G40.
\end{abstract}

\section{Introduction}

Consider an ordered series of random times $\tau_{1}\leq...\leq\tau_{m}$
accounting for the sequenced occurence of certain events. In the context
of credit risk, these random times can be seen as \emph{credit events}
such as the firm's value sudden deterioration, credit rate downgrade,
the firm's default, etcetera. The valuation of defaultable claims
(see \cite{BieleckiRutkowski,SchCarCR}) is closely related to computation
of the quantities
\[
\mathbb{P}(\tau_{n}>T|\mathcal{F}_{t}),\qquad t\geq0,\quad n=1,...,m,
\]
where the \emph{reference filtration} $\mathbb{F}=(\mathcal{F}_{t})_{t\geq0}$
accounts for the information generated by all state variables.

An interesting possibility to model these random times consists in
considering $\tau_{1},...,\tau_{m}$ as the succesive jumps of a \emph{doubly-stochastic
Poisson process} (\emph{DSPP}). That is, a time-changed Poisson process
$(P_{\Lambda_{t}})_{t\ge0}$, where the time change $(\Lambda_{t})_{t\geq0}$
is a non-decreasing c\`adl\`ag $\mathbb{F}$-adapted process starting
at zero; and the Poisson process $(P_{t})_{t\ge0}$ has intensity
rate equal to $1$, and it is independent of $\mathbb{F}$. We refer
to $(\Lambda_{t})_{t\geq0}$ as the \emph{Hazard process}.

The purpose of this note is to study the obtainment of closed-form
formulas for the distributions of the $n$-th jump of a doubly-stochastic
Poisson process. We address the problem from two different approaches.
First, we relate this problem to the computation of the first $n$
derivatives of the Hazard process cumulant generating function. As
shown below, the result is written in closed-form in terms of \emph{Bell
polynomials} on the aforementioned derivatives ---see \cite{Comtet,Johnson,Riordan}
for details on these polynomials.
\begin{thm}
\label{thm: characteristic function}For $0\leq t<T$, denote the
cumulant generating function of $\Lambda_{T}$ by
\[
\Psi(u):=\log\mathbb{E}\left[\left.\exp\{\mathrm{\mathrm{i}}u\Lambda_{T}\}\right|\mathcal{F}_{t}\right].
\]
If $\Lambda_{T}$ has a finite conditional $n$-th moment (\emph{i.e.},
$\mathbb{E}\left[\left.\Lambda_{T}^{n}\right|\mathcal{F}_{t}\right]<\infty$),
then the following equation holds true
\begin{equation}
\mathbb{P}(\tau_{n}>T|\mathcal{F}_{t})=\mathbf{1}_{\{\tau_{n}>t\}}\sum_{k=0}^{n-1}\frac{\mathrm{e}^{\Psi(\mathrm{i})}}{k!\mathrm{i}^{k}}\mathbf{B}_{k}\left(\frac{\partial\Psi}{\partial u}(\mathrm{i}),...,\frac{\partial^{k}\Psi}{\partial u^{k}}(\mathrm{i})\right),\label{eq: n-th cox jump distribution}
\end{equation}
where $\mathbf{B}_{k}$ is the $k$-th Bell polynomial.
\end{thm}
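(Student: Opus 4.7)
The plan is to reduce the computation of $\mathbb{P}(\tau_{n}>T\mid\mathcal{F}_{t})$ to a finite sum whose summands are tail Poisson probabilities integrated against the conditional law of $\Lambda_{T}$, and then to recognise each summand as a complex derivative at $u=\mathrm{i}$ of the conditional characteristic function $\phi(u):=\mathbb{E}[\mathrm{e}^{\mathrm{i}u\Lambda_{T}}\mid\mathcal{F}_{t}]=\mathrm{e}^{\Psi(u)}$. The Bell polynomials will then enter naturally through the Fa\`a di Bruno formula applied to the composition $\exp\circ\Psi$.

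First I would exploit the independence of $P$ and $\mathbb{F}$: conditionally on $\mathcal{F}_{T}$, the random variable $P_{\Lambda_{T}}$ is Poisson with (random) parameter $\Lambda_{T}$, and on $\{\tau_{n}>t\}$ the event $\{\tau_{n}>T\}$ coincides with $\{P_{\Lambda_{T}}\le n-1\}$. Taking the $\mathcal{F}_{T}$-conditional probability, then an $\mathcal{F}_{t}$-conditional expectation, and interchanging the finite sum with the expectation (legitimate because $\Lambda_{T}^{k}\mathrm{e}^{-\Lambda_{T}}\le 1+\Lambda_{T}^{n}$ is integrable by the moment hypothesis), one obtains
\[
\mathbb{P}(\tau_{n}>T\mid\mathcal{F}_{t})=\mathbf{1}_{\{\tau_{n}>t\}}\sum_{k=0}^{n-1}\frac{1}{k!}\,\mathbb{E}\!\left[\Lambda_{T}^{k}\mathrm{e}^{-\Lambda_{T}}\mid\mathcal{F}_{t}\right].
\]
Next, since $\partial_{u}^{k}\mathrm{e}^{\mathrm{i}u\Lambda_{T}}=(\mathrm{i}\Lambda_{T})^{k}\mathrm{e}^{\mathrm{i}u\Lambda_{T}}$, differentiating under the expectation yields $\mathbb{E}[\Lambda_{T}^{k}\mathrm{e}^{-\Lambda_{T}}\mid\mathcal{F}_{t}]=\mathrm{i}^{-k}\,\partial_{u}^{k}\phi(\mathrm{i})$. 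The last step applies the classical Bell-polynomial identity
\[
\frac{\partial^{k}\phi}{\partial u^{k}}(u)=\mathrm{e}^{\Psi(u)}\,\mathbf{B}_{k}\!\left(\frac{\partial\Psi}{\partial u}(u),\ldots,\frac{\partial^{k}\Psi}{\partial u^{k}}(u)\right),
\]
which, evaluated at $u=\mathrm{i}$ and substituted into the previous display, produces (\ref{eq: n-th cox jump distribution}) term by term.

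The main obstacle is analytic rather than combinatorial: one must justify differentiating $\phi$ at the non-real point $u=\mathrm{i}$ and pulling the derivatives inside the conditional expectation. This is handled by observing that on any complex neighbourhood of $\mathrm{i}$ contained in $\{\operatorname{Im}(u)\ge 1/2\}$ the uniform estimate $|\partial_{u}^{k}\mathrm{e}^{\mathrm{i}u\Lambda_{T}}|=\Lambda_{T}^{k}\mathrm{e}^{-\operatorname{Im}(u)\Lambda_{T}}\le\Lambda_{T}^{k}\le 1+\Lambda_{T}^{n}$ holds, so the hypothesis $\mathbb{E}[\Lambda_{T}^{n}\mid\mathcal{F}_{t}]<\infty$ supplies the integrable dominant required by dominated convergence. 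Once this technical point is settled, the remainder of the argument is a direct algebraic assembly of the three steps above.
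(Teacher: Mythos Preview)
Your proposal is correct and follows essentially the same route as the paper: reduce $\mathbb{P}(\tau_{n}>T\mid\mathcal{F}_{t})$ to the sum $\sum_{k=0}^{n-1}\frac{1}{k!}\mathbb{E}[\Lambda_{T}^{k}\mathrm{e}^{-\Lambda_{T}}\mid\mathcal{F}_{t}]$, identify each term with $\mathrm{i}^{-k}\partial_{u}^{k}\phi(\mathrm{i})$, and apply Fa\`a di Bruno (Riordan's formula) to $\phi=\mathrm{e}^{\Psi}$. The only noteworthy difference is that you give an explicit dominated-convergence argument to justify differentiation under the conditional expectation at the complex point $u=\mathrm{i}$, whereas the paper simply invokes the standard moments-and-derivatives result for characteristic functions; your treatment of this analytic point is in fact more careful than the paper's.
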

In light of this result, two considerations are in order. On the one
hand, it is desirable to consider a model for $(\Lambda_{t})_{t\geq0}$
having a cumulant generating function $\Psi$ being analytic (around
$\mathrm{i}$), so that arbitrary jumps of the doubly-stochastic Poisson
process can be handled. On the other hand, it is straightforward to
compute (\ref{eq: n-th cox jump distribution}) in closed-form given
a tratactable expression for the cumulant generating function $\Psi$.
See examples in Section 2.

As a second approach, we compute the aforementioned distributions
direcly, by means of the Malliavian calculus. For this approach we
consider a strictly positive pure-jump L\'evy process $(L_{t})_{t\geq0}$
with L\'evy measure $\nu$, and having moments of all orders ---see
\cite{Applebaum,Sato} and \cite{dNOkPro} for a general exposition
about L\'evy processes and Malliavin calculus. We then assume that
the Hazard process is of the form 

\begin{equation}
\Lambda_{t}=\int_{0}^{t}\int_{\mathbb{R}_{0}}\sigma(s,z)\widetilde{N}(\mathrm{d}s,\mathrm{d}z),\qquad t\geq0,\label{eq: Levy driven Hazard process-1}
\end{equation}

where $\widetilde{N}$ is the compensated Poisson random measure associated
$(L_{t})_{t\geq0}$, and $\sigma$ is a deterministic function, integrable
with respect to $\widetilde{N}$. Assume further that $\mathbb{F}$
is given by the natural filtration generated by the driving L\'evy
process $(L_{t})_{t\geq0}$. In this setting, we have the following
result.
\begin{thm}
\label{thm: the recursive formula}The conditional distribution of
the $n$-th jump of doubly-stochastic Poisson process with Hazard
process satisfying (\ref{eq: Levy driven Hazard process-1}) is given
by
\[
\mathbb{P}(\tau_{n}>T|\mathcal{F}_{t})=\mathbf{1}_{\{\tau_{n}>t\}}\mathrm{e}^{\Lambda_{t}}\left(\sum_{k=0}^{n-1}\sum_{j=0}^{k}\frac{\Lambda_{t}^{j}}{j!(k-j)!}m_{k-j}(t)\right),
\]
where the quantities $m_{0},m_{1},...,m_{n}$ are given recursively
according to 
\begin{equation}
m_{0}(t):=\exp\left\{ \int_{t}^{T}\int_{\mathbb{R}_{0}}\left[\mathrm{e}^{-\sigma(s,z)}-1+\sigma(s,z)\right]\mathrm{d}s\nu(\mathrm{d}z)\right\} ,\label{eq:malliavin recursion start}
\end{equation}
and for $r\geq1$ 
\begin{align}
m_{r+1}(t) & =m_{r}(t)\int_{t}^{T}\int_{\mathbb{R}_{0}}(\mathrm{e}^{-\sigma(s,z)}-1)\sigma(s,z)\mathrm{d}s\nu(\mathrm{d}z)\label{eq:malliavin recursion}\\
 & \quad+\sum_{k=1}^{r}\binom{r}{k}m_{r-k}(t)\int_{t}^{T}\int_{\mathbb{R}_{0}}\mathrm{e}^{-\sigma(s,z)}\sigma^{k+1}(s,z)\mathrm{d}s\nu(\mathrm{d}z).\nonumber 
\end{align}

\end{thm}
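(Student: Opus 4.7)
The plan is to reduce the conditional probability to a combination of the scalar quantities $m_r(t):=\mathbb{E}[Y^r\mathrm{e}^{-Y}]$, where $Y:=\Lambda_T-\Lambda_t$, and then derive the initial value $m_0(t)$ and the recursion for $m_{r+1}(t)$ via Malliavin calculus for the Poisson random measure $\widetilde{N}$.

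First, since $\{\tau_n>T\}=\{P_{\Lambda_T}<n\}$ and $P$ is independent of $\mathbb{F}$, conditioning on $\Lambda_T$ yields
\[
\mathbb{P}(\tau_n>T\mid \mathcal{F}_t)=\mathbf{1}_{\{\tau_n>t\}}\sum_{k=0}^{n-1}\frac{1}{k!}\,\mathbb{E}\!\left[\Lambda_T^k\,\mathrm{e}^{-\Lambda_T}\,\Big|\,\mathcal{F}_t\right].
\]
Because $\sigma$ is deterministic and $(L_t)$ has independent increments, the increment $Y=\int_t^T\int_{\mathbb{R}_{0}}\sigma(s,z)\,\widetilde{N}(\mathrm{d}s,\mathrm{d}z)$ is independent of $\mathcal{F}_t$. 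Writing $\Lambda_T=\Lambda_t+Y$, a binomial expansion of $(\Lambda_t+Y)^k$ factors the $\mathcal{F}_t$-measurable terms out of the conditional expectation and produces the double sum in the statement, with $m_{k-j}(t)=\mathbb{E}[Y^{k-j}\mathrm{e}^{-Y}]$. The initial value $m_0(t)=\mathbb{E}[\mathrm{e}^{-Y}]$ then follows from the L\'evy--Khintchine formula applied to $Y$, yielding (\ref{eq:malliavin recursion start}).

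For the recursion I would apply the Malliavin duality formula for the Poisson random measure: for a deterministic $\sigma$ and a Malliavin-differentiable functional $F$,
\[
\mathbb{E}[Y\,F]=\mathbb{E}\!\left[\int_t^T\!\int_{\mathbb{R}_{0}}\sigma(s,z)\,D_{s,z}F\,\mathrm{d}s\,\nu(\mathrm{d}z)\right].
\]
Applied to $F=Y^r\mathrm{e}^{-Y}$, the ``add-one'' representation $D_{s,z}G(\omega)=G(\omega+\delta_{(s,z)})-G(\omega)$ of the Malliavin derivative gives
\[
D_{s,z}(Y^r\mathrm{e}^{-Y})=(Y+\sigma(s,z))^r\mathrm{e}^{-Y-\sigma(s,z)}-Y^r\mathrm{e}^{-Y}.
\]
Binomially expanding $(Y+\sigma)^r$ and isolating the $k=0$ summand---which combines with the subtracted $Y^r\mathrm{e}^{-Y}$ to produce the factor $\sigma(\mathrm{e}^{-\sigma}-1)$ multiplying $m_r$---from the $k\geq 1$ terms---which produce the weights $\mathrm{e}^{-\sigma}\sigma^{k+1}$ multiplying $m_{r-k}$---identifies the resulting expression term-by-term with (\ref{eq:malliavin recursion}).

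The main obstacle is the Malliavin step: one must verify the integrability hypotheses needed for the duality formula (using that $L$ has moments of all orders and that $\sigma$ is integrable against $\widetilde{N}$), choose $F=Y^r\mathrm{e}^{-Y}$ as the appropriate test functional, and manage the binomial bookkeeping so that the recursion emerges in precisely the stated form. The remaining components are standard doubly-stochastic reasoning and a direct application of L\'evy--Khintchine.
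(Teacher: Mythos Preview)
Your proposal is correct and follows essentially the same route as the paper: reduce to $\mathbb{E}[\Lambda_T^k e^{-\Lambda_T}\mid\mathcal{F}_t]$ via the doubly-stochastic structure, binomially split $\Lambda_T=\Lambda_t+Y$ using independence of increments, identify $m_0$ by L\'evy--Khintchine, and obtain the recursion by applying the Malliavin duality formula to $F=Y^r e^{-Y}$. The only cosmetic difference is that you invoke the ``add-one'' difference-operator form of $D_{s,z}$ in one shot, whereas the paper arrives at the same expression by combining the product rule and the chain rule separately (its Lemma~\ref{lem: the Malliavin lemma}) before specializing to $f=\mathbf{1}_{[t,T]}\sigma$.
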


The rest of the paper is organized as follows. In Section \ref{sec: Examples}
we present relevant examples appearing the literature. Finally in
Section 3 we provide the proofs of our results.

Let us remark that eventhough our study is motivated by the valuation
of defaultable claims, our results can potentially be also used in
other areas; see for instance \cite{Barraza,Lando,Zacks} and references
therein.

\section{Examples \label{sec: Examples}}

In many traditional models (\emph{e.g.}, \cite{DuffieLando}) the
Hazard processes $(\Lambda_{t})_{t\geq0}$ is assumed to be absolutely
continuous with respect to the Lebesgue measure, that is,
\begin{equation}
\Lambda_{t}:=\mathrm{\int_{0}^{t}\lambda_{s}\mathrm{d}s},\qquad t\geq0,\label{eq:abscont-1}
\end{equation}
where the process $(\lambda_{t})_{t\geq0}$ is usually refer to as
the \emph{hazard rate}, and it is seen as the instantaneous rate of
default in the credit risk context.\emph{ }The following two examples
show how to use Theorem using two prominent particular cases for the
hazard rate ---and consequently for the Hazard process.
\begin{example}
The\emph{ integrated square-root process} $(\Lambda_{t}^{intSR})_{t\geq0}$
(see \cite{Dufresne}) defined by means of (\ref{eq:abscont-1}) where
the hazard rate is given by the solution of
\[
\mathrm{d}\lambda_{t}^{SR}=\vartheta(\kappa-\lambda_{t}^{SR})\mathrm{d}t+\sigma\sqrt{\lambda_{t}^{SR}}\mathrm{d}W_{t},
\]
where $(W_{t})_{t\geq0}$ is a Brownian motion, and we assume $\sigma>0$
and $\vartheta\kappa\geq\sigma^{2}$ in order to ensure that $(\lambda_{t}^{SR})_{t\geq0}$
remains positive. Take now $\mathbb{F}$ as the natural filtration
generated by $(W_{t})_{t\geq0}$. It is well-known that the correspondent
Hazard process has an analytic cumulant generating function given
by 
\[
\Psi^{intSR}(u):=A(u,T-t)+\lambda_{t}^{SR}B(u,T-t),\qquad T\geq t\geq0,
\]
where the functions $A$ and $B$ are given by 
\[
A(u,T-t)=\frac{2\vartheta\kappa}{\sigma^{2}}\log\left(\frac{2\gamma\mathrm{e}^{\frac{1}{2}(\gamma+\vartheta)(T-t)}}{(\gamma+\vartheta)\mathrm{e}^{-\gamma(T-t)}-2\gamma}\right),\quad\text{and}\quad B(u,T-t)=\frac{2\gamma(\mathrm{e}^{-\gamma(T-t)}-1)}{(\gamma+\vartheta)\mathrm{e}^{-\gamma(T-t)}-2\gamma}
\]
with $\gamma:=\gamma(u):=\sqrt{\vartheta^{2}-2\mathrm{i}u\sigma^{2}}$.
The simplicity of $\Psi^{intSR}$ allows to compute its partial derivatives
involved in (\ref{eq: n-th cox jump distribution}). And finally we
can use the $n$-th Bell polynomial \emph{$\mathbf{B}_{n}$} characterization
given b\emph{y
\[
\mathbf{B}_{n}(x_{1},...,x_{n}):=\det\begin{bmatrix}\binom{n-1}{0}x_{1} & \binom{n-1}{1}x_{2} & \binom{n-1}{2}x_{3} & \cdots & \binom{n-1}{n-2}x_{n-1} & \binom{n-1}{n-2}x_{n}\\
-1 & \binom{n-2}{1}x_{1} & \binom{n-2}{1}x_{2} & \cdots & \binom{n-2}{n-3}x_{n-2} & \binom{n-2}{n-2}x_{n-1}\\
0 & -1 & \binom{n-3}{1}x_{1} & \cdots & \binom{n-3}{n-4}x_{n-3} & \binom{n-3}{n-3}x_{n-2}\\
\vdots & \vdots & \vdots &  & \vdots & \vdots\\
0 & 0 & 0 & \cdots & \binom{1}{0}x_{1} & \binom{1}{1}x_{2}\\
0 & 0 & 0 & \cdots & -1 & \binom{0}{0}x_{1}
\end{bmatrix},
\]
}where in each column the remaining entries below the $-1$ are equal
to zero. For instance, one can easily see that the first three Bell
polynomials are $\mathbf{B}_{1}(x_{1})=x_{1}$, $\mathbf{B}_{2}(x_{1},x_{2})=x_{1}^{2}+x_{2}$
and $\mathbf{B}_{3}(x_{1},x_{2},x_{3})=x_{1}^{3}+3x_{1}x_{2}+x_{3}$.
\end{example}

\begin{example}
\emph{The integrated non-Gaussian Ornstein-Uhlenbeck processes} (see
\cite{intOU}) defined by means of 
\begin{equation}
\Lambda_{t}^{intOU}:=\frac{1}{\vartheta}(1-\mathrm{e}^{-\vartheta t})\lambda_{0}+\frac{1}{\vartheta}\int_{0}^{t}(1-\mathrm{e}^{-\vartheta(t-s)})\mathrm{d}L_{\vartheta s},\qquad t\geq0,\label{eq:intOU}
\end{equation}
where $\vartheta,\lambda_{0}>0$ are free parameters, $\lambda_{0}$
being random, and $(L_{t})_{t\geq0}$ a non-decreasing pure-jump positive
L\'evy process. Equivalently, we can consider again the model in
(\ref{eq:abscont-1}) where this time $(\lambda_{t})_{t\geq0}$ is
given by the solution of
\[
\mathrm{d}\lambda_{t}=-\vartheta\lambda_{t}\mathrm{d}t+\mathrm{d}L_{\vartheta t},\qquad\lambda_{0}>0.
\]
An interesting property of this Hazard rate process is that it has
continuous sample paths. It can be shown that 
\begin{equation}
\Psi^{intOU}(u):=\frac{\mathrm{i}u\lambda_{0}}{\vartheta}(1-\mathrm{e}^{-\vartheta T})+\vartheta\int_{0}^{T}k_{L}\left(\frac{u}{\vartheta}(1-\mathrm{e}^{-\vartheta(T-s)})\right)\mathrm{d}s,\qquad T\geq0,\label{eq:log phi intOU}
\end{equation}
where we take $\mathcal{F}_{0}=\sigma(\lambda_{0})$, that is, the
$\sigma$-algebra generated by $\lambda_{0}$. Particular cases of
interest are the following. On the one hand, we have the so-called
\emph{Gamma$(a,b)$-OU process }which is obtained by taking $(L_{t})_{t\geq0}$
as a Compound Poisson process
\[
L_{t}=\sum_{n=1}^{Z_{t}}x_{n},\qquad t\geq0,
\]
where $(Z_{t})_{t\geq0}$ is a Poisson process with intensity $a\vartheta$,
and $(x_{n})_{n\geq1}$ is a sequence of independent identically distributed
$\mathrm{Exp}(b)$ variables. In this case, the correspondent Hazard
process in (\ref{eq:intOU}) has a finite number of jumps in every
compact time interval. Moreover, the equation (\ref{eq:log phi intOU})
becomes
\[
\Psi_{Gamma}^{intOU}(u):=\frac{\mathrm{i}u\lambda_{0}}{\vartheta}(1-\mathrm{e}^{-\vartheta T})+\frac{\vartheta a}{\mathrm{i}u-\vartheta b}\left(\left(b\log\left(\frac{b}{b-\frac{\mathrm{i}u}{\vartheta}(1-\mathrm{e}^{-\vartheta T})}\right)-\mathrm{i}uT\right)\right).
\]

On the other hand, we have the so-called \emph{Inverse-Gausssian}$(a,b)$\emph{-OU}
\emph{process} (see \cite{Nicolato} and Tompkins and Hubalek (2000))
which is obtained by taking $(L_{t})_{t\geq0}$ as the sum of two
independent processes, $(L_{t}=L_{t}^{(1)}+L_{2}^{(2)})_{t\geq0}$,
where $(L_{t}^{(1)})_{t\geq0}$ is an Inverse-Gausssian$(\frac{1}{2}a,b)$
process, and $(L_{t}^{(2)})_{t\geq0}$ is a Compound Poisson process
\[
L_{t}^{(2)}=b^{-1}\sum_{n=1}^{Z_{t}}x_{n}^{2},\qquad t\geq0,
\]
where $(Z_{t})_{t\geq0}$ is a Poisson process with intensity $\frac{1}{2}ab$,
and $(x_{n})_{n\geq1}$ is a sequence of independent identically distributed
$\mathrm{Normal}(0,1)$ variables. In this case, the correspondent
Hazard process in (\ref{eq:intOU}) jumps infinitely often in every
interval. Moreover, the equation (\ref{eq:log phi intOU}) becomes
\[
\Psi_{IG}^{intOU}(u):=\frac{\mathrm{i}u\lambda_{0}}{\vartheta}(1-\mathrm{e}^{-\vartheta T})+\frac{2a\mathrm{i}u}{b\vartheta}A(u,T),
\]

where, using $c:=-2b^{-2}\mathrm{i}u\vartheta^{-1}$, the function
$A$ is defined by
\begin{align*}
A(u,T):= & \frac{1-\sqrt{1+c(1-\mathrm{e}^{-\vartheta T})}}{c}\\
 & +\frac{1}{\sqrt{1+c}}\left[\mathrm{arctanh}\left(\frac{1-\sqrt{1+c(1-\mathrm{e}^{-\vartheta T})}}{c}\right)-\mathrm{arctanh}\left(\frac{1}{\sqrt{1+c}}\right)\right].
\end{align*}

In both of the cases above, we can see that the simplicity of $\Psi$
allows to compute (\ref{eq: n-th cox jump distribution}) in a straightforward
way.
\end{example}
This traditional approach reduces the analytical tractability of the
model, along with its parameters calibration. Indeed, suffices to
say the Laplace transform of a Hazard process as in (\ref{eq:abscont-1})
is known in closed-form only for a reduced number of Hazard rates
models. That is one the reasons why in more recent contributions the
modelling focus is set on the Hazard process itself, without requiring
to make a reference to the Hazard rate ---see for intance \cite{Bianchi,Nicolato}.
In this line, consider a Hazard process $(\Lambda_{t})_{t\geq0}$
as given in (\ref{eq: Levy driven Hazard process-1}). The following
example provides an explicit computation the quantities involved in
Theorem \ref{thm: the recursive formula}.
\begin{example}
(\emph{CMY Hazard process}) In the financial literature, the \emph{CMY
process} \emph{---or one-sided CGMY process} \cite{CGMY}--- with
parameters $C,M>0$ and $Y<1$ refers to the positive pure-jump L\'evy
process $(L_{t}^{CMY})_{t\geq0}$ having L\'evy measure $\nu_{CMY}$
given by
\[
\nu_{CMY}(z):=\frac{C\mathrm{e}^{-Mz}}{z^{1+Y}}\mathbf{1}_{\{z>0\}}.
\]
The \emph{Gamma process} and the \emph{Inverse Gaussian process} can
be seen as particular cases by taking $Y=0$ and $Y=\frac{1}{2}$,
respectively, see \cite{SchCarCR}.

Consider now a Hazard process of the form
\[
\Lambda_{t}^{CMY}:=\int_{0}^{t}\sigma(s)\mathrm{d}L_{s}^{CMY},\qquad t\geq0.
\]
This is equivalent to take, in (\ref{eq: Levy driven Hazard process-1}),
a function $\sigma$ is of the form $\sigma(s,z)=z\sigma(s)$. Then
the quantities in (\ref{eq:malliavin recursion start}) and (\ref{eq:malliavin recursion})
are given by
\[
m_{0}^{CMY}(t)=\begin{cases}
\exp\left\{ C\int_{t}^{T}\Gamma(1-Y)M^{Y-1}\sigma(s)+\Gamma(-Y)\left[(M+\sigma(s))^{Y}-M^{Y}\right]\mathrm{d}s\right\} , & Y\neq0\\
\exp\left\{ C\int_{t}^{T}\frac{\sigma(s)}{M}-\log\left(1+\frac{\sigma(s)}{M}\right)\mathrm{d}s\right\} , & Y=0
\end{cases}
\]
and
\begin{align*}
m_{n+1}^{CMY}(t) & =m_{n}^{CMY}(t)C\Gamma(1-Y)\left[\int_{t}^{T}\sigma(s)\left((M+\sigma(s))^{Y-1}-M{}^{Y-1}\right)\mathrm{d}s\right]\\
 & \quad+\sum_{k=1}^{n}\binom{n}{k}m_{n-k}^{CMY}(t)C\Gamma(k+1-Y)\int_{t}^{T}\sigma^{k+1}(s)(M+\sigma(s))^{Y-(k+1)}\mathrm{d}s.
\end{align*}
for $n\geq1$.

\emph{}
\end{example}

Finally, let us remark that we when considering a model like (\ref{eq: Levy driven Hazard process-1}),
the quantities appearing in Theorem \ref{thm: characteristic function}
and Theorem \ref{thm: the recursive formula} can be related according
to the following. 
\begin{example}
\label{cor: char func} Let the Hazard process $(\Lambda_{t})_{t\geq0}$
be given as in (\ref{eq: Levy driven Hazard process-1}). It can be
seen that in this case (Lemma \ref{lem: The-conditional-characteristic}
below) the cumulant generating function is given by
\[
\Psi(u)=\mathrm{\mathrm{i}}u\Lambda_{t}+\int_{t}^{T}\int_{\mathbb{R}_{0}}\left[\mathrm{e}^{\mathrm{\mathrm{i}}u\sigma(s,z)}-1-\mathrm{\mathrm{i}}u\sigma(s,z)\right]\mathrm{d}s\nu(\mathrm{d}z).
\]
Consequently, if the function $\sigma$ has finite moments 
\begin{equation}
\int_{0}^{T}\int_{\mathbb{R}_{0}}\sigma^{k}(s,z)\mathrm{d}s\nu(\mathrm{d}z)<\infty,\qquad k=1,...,n,\label{eq: assumption on sigma}
\end{equation}
then the $n$-th derivative of $\Psi$ is given by 
\[
\frac{1}{\mathrm{\mathrm{i}}}\frac{\partial\Psi}{\partial u}=\Lambda_{t}+\int_{t}^{T}\int_{\mathbb{R}_{0}}\left[\mathrm{e}^{\mathrm{\mathrm{i}}u\sigma(s,z)}-1\right]\sigma(s,z)\mathrm{d}s\nu(\mathrm{d}z),
\]
and
\[
\frac{1}{\mathrm{\mathrm{i}}^{k}}\frac{\partial^{k}\Psi}{\partial u^{k}}=\int_{t}^{T}\int_{\mathbb{R}_{0}}\mathrm{e}^{\mathrm{\mathrm{i}}u\sigma(s,z)}\sigma^{k}(s,z)\mathrm{d}s\nu(\mathrm{d}z),\qquad k=2,...,n.
\]
Indeed, these equations can be obtained by succesive differentiation
under the integral sign due to the assumption (\ref{eq: assumption on sigma}).
\end{example}

\section{Proofs\label{sec: Proofs}}

Let us start by the construction of the doubly-stochastic Poisson
process that we shall consider in what follows.

Let $\mathbb{F}=(\mathcal{F}_{t})_{t\geq0}$\emph{ }denote our reference
filtration; we shall assume that it satisfies the usual conditions
of $\mathbb{P}$-completeness and right-continuity. Let the i.i.d.
random variables $\eta_{1},...,\eta_{m}$ be exponentially distributed
with parameter $1$, all being independent of $\mathcal{F}_{\infty}$.
Then the $n$-th jump of the doubly-stochastic Poisson process with
Hazard rate $(\Lambda_{t})_{t\geq0}$ can be characterized as
\begin{equation}
\tau_{n}=\inf\left\{ t>0\::\;\Lambda_{t}\geq\eta_{1}+...+\eta_{n}\right\} .\label{eq: n-th jump-1}
\end{equation}

This construction leads to the following expression for the conditional
distribution of the DSPP $n$-th jump
\begin{equation}
\mathbb{P}(\tau_{n}>T|\;\mathcal{F}_{T})=\mathrm{e}^{-\Lambda_{T}}\sum_{j=0}^{n-1}\frac{1}{j!}\Lambda_{T}^{j},\qquad T\geq0.\label{eq: n-th jump survival probability-1}
\end{equation}

Indeed, by construction,
\[
\mathbb{P}(\tau_{n}>t|\;\mathcal{F}_{\infty})=\mathbb{P}\Bigg(\sum_{j=1}^{n}\eta_{j}>\Lambda_{t}\Bigg|\;\mathcal{F}_{\infty}\Bigg)=\mathrm{e}^{-\Lambda_{t}}\sum_{j=0}^{n-1}\frac{\Lambda_{t}^{j}}{j!},
\]
since conditioned to $\mathcal{F}_{\infty}$ the random variable $\eta_{1}+...+\eta_{n}$
has an Gamma distribution. The result then follows by preconditioning
to $\mathcal{F}_{t}$ ---recall that $(\Lambda_{t})_{t\geq0}$ is
$\mathbb{F}$-adapted.

Notice first that by conditioning (\ref{eq: n-th jump survival probability-1})
to $\mathcal{F}_{t}$ we get
\begin{equation}
\mathbb{P}(\tau_{n}>T|\;\mathcal{F}_{t})=\sum_{j=0}^{n-1}\frac{1}{j!}\mathbb{E}\left[\left.\Lambda_{T}^{j}\mathrm{e}^{-\Lambda_{T}}\right|\mathcal{F}_{t}\right],\qquad T\geq t\geq0.\label{eq: n-th jump survival probability}
\end{equation}
Then the purpose of Theorem \ref{thm: characteristic function} and
Theorem \ref{thm: the recursive formula} is to provide a way to compute
the conditional expectations in the equation above.

\subsection{Proof of Theorem \ref{thm: characteristic function}}

Let $\mu_{t}$ stand for the conditional (to $\mathcal{F}_{t}$) law
of $\Lambda_{T}$, so that the assumption on the $n$-th conditional
moment reads
\[
\int_{\mathbb{R}}x^{n}\mu_{t}(\mathrm{d}x)<\infty.
\]
As in the unconditional case (\emph{cf.} \cite[Theorem 13.2]{JacPro}),
the condition above ensures that the conditional characteristic function
\[
\varphi(u;t,T):=\mathbb{E}\left[\left.\exp\{\mathrm{\mathrm{i}}u\Lambda_{T}\}\right|\mathcal{F}_{t}\right]
\]
has continuous partial derivatives up to order $n$, and furthermore
the following equation holds true
\[
\frac{1}{\mathrm{i}^{k}}\frac{\partial^{k}\varphi(u;t,T)}{\partial u^{k}}=\mathbb{E}\left[\left.\Lambda_{T}^{k}\mathrm{e}^{\mathrm{i}u\Lambda_{T}}\right|\mathcal{F}_{t}\right],\qquad k=0,1,...,n.
\]
Now, let us recall that the\emph{ $n$-th Bell polynomial} \textbf{$\mathbf{B}_{n}$
}can also be written as
\begin{equation}
\mathbf{B}_{n}(x_{1},...,x_{n})=\sum_{k=0}^{n}\mathbf{B}_{n,k}(x_{1},...,x_{n-k+1}),\label{eq: Bell polynomials}
\end{equation}
where $\mathbf{B}_{n,k}$ stands for th \emph{partial} \emph{$(n,k)$-th
Bell polynomial}, \emph{i.e.}, $\mathbf{B}_{0,0}:=1$ and
\[
\mathbf{B}_{n,k}(x_{1},...,x_{n-k+1}):=\sum\frac{n!}{j_{1}!j_{2}!\cdots j_{n-k+1}!}\left(\frac{x_{1}}{1!}\right)^{j_{1}}\left(\frac{x_{2}}{2!}\right)^{j_{1}}\cdots\left(\frac{x_{n-k+1}}{(n-k+1)!}\right)^{j_{n-k+1}},
\]
where the sum runs over all sequences of non-negative indices such
that $j_{1}+j_{2}+\cdots=k$ and $j_{1}+2j_{2}+3j_{3}+\cdots=n$.
Using the Bell polynomials we have an expression for the chain rule
for higher derivatives:
\[
\frac{\mathrm{d}^{n}}{\mathrm{d}x^{n}}f\circ g=\sum_{k=0}^{n}(f^{(k)}\circ g)\mathbf{B}_{n,k}(g^{(1)},...,g^{(n+k-1)}),
\]
where the superscript denotes the correspondent derivative, \emph{i.e.},
$f^{(k)}:=\frac{\mathrm{d}^{k}}{\mathrm{d}x^{k}}f$ and $g^{(k)}:=\frac{\mathrm{d}^{k}}{\mathrm{d}x^{k}}g$,
which are assumed to exist. This expression is known as the \emph{Riordan's
formula} ---for these results on Bell polynomials we refer to \cite{Comtet,Johnson,Riordan}.

It remains to apply Riordan's formula to $f=\mathrm{exp}$ and $g=\Psi$
in order to get
\[
\frac{\mathrm{d}^{n}}{\mathrm{d}x^{n}}\varphi=\sum_{k=0}^{n}\varphi\mathbf{B}_{n,k}(\Psi^{(1)},...,\Psi^{(n+k-1)})=\varphi\mathbf{B}_{n}(\Psi^{(1)},...,\Psi^{(n)}),
\]
where the last equivalence follows from (\ref{eq: Bell polynomials}).

\subsection{Proof of Theorem \ref{thm: the recursive formula}}

From this moment on, we shall work with a strictly positive pure-jump
L\'evy process $(L_{t})_{t\geq0}$ having a L\'evy measure $\nu$
satisfying
\[
\int_{(-\varepsilon,\varepsilon)}\mathrm{e}^{pz}\nu(\mathrm{d}z)<\infty
\]
for every $\varepsilon>0$ and certain $p>0$. This condition implies
in particular that $(L_{t})_{t\geq0}$ have moments of all orders,
and the polynomials are dense in $L^{2}(\mathrm{d}t\times\nu)$. Notice
that this condition is always satisfied if the L\'evy measure has
compact support.

In other to prove the corollary we need the following.

\subsubsection{Preliminaries on Malliavin calculus via chaos expansions}

Let us now introduce basic notions of Malliavin calculus for Lévy
processes which we shall use as a framework. Here we mainly follow
\cite{dNOkPro}.

For every $T>0$, let $\mathcal{L}_{T}^{2}((\mathrm{d}t\times\nu)^{n}):=\mathcal{L}^{2}(([0,T]\times\mathbb{R}_{0})^{n})$
be the space of determinisitic functions such that
\[
\left\Vert f\right\Vert _{\mathcal{L}_{T}^{2}((\mathrm{d}t\times\nu)^{n})}:=\left(\int_{([0,T]\times\mathbb{R}_{0})^{n}}f^{2}(t_{1},z_{1},...,t_{n},z_{n})\mathrm{d}t_{1}\nu(\mathrm{d}z_{1})\cdots\mathrm{d}t_{n}\nu(\mathrm{d}z_{n})\right)^{\frac{1}{2}}<\infty,
\]
and such that they are zero over \emph{$k$-diagonal sets}, see \cite[Remark 2.1]{SoleUtzetVives}.
The \emph{symmetrization} $\tilde{f}$ of $f$ is defined by
\[
\tilde{f}(t_{1},z_{1},...,t_{n},z_{n}):=\frac{1}{n!}\sum_{\sigma}f(t_{\sigma(1)},z_{\sigma(1)},...,t_{\sigma(n)},z_{\sigma(n)}),
\]
where the sum runs over all the permutations $\sigma$ of $\{1,...,n\}$.
For every $f$ in the subspace of symmetric functions, $\widetilde{\mathcal{L}}_{T}^{2}((\mathrm{d}t\times\nu)^{n}):=\{f\in\mathcal{L}^{2}((\mathrm{d}t\times\nu)^{n})\;:\;f=\tilde{f}\}$,
we define the $n$-\emph{fold iterated integral of $f$ by}
\[
I_{n}(f):=n!\int_{0}^{T}\int_{\mathbb{R}_{0}}\cdots\int_{0}^{t_{2}}\int_{\mathbb{R}_{0}}f(t_{1},z_{1},...,t_{n},z_{n})\widetilde{N}(\mathrm{d}t_{1},\mathrm{d}z_{1})\cdots\widetilde{N}(\mathrm{d}t_{n},\mathrm{d}z_{n}).
\]
For constant values $f_{0}\in\mathbb{R}$ we set $I_{0}(f_{0}):=f_{0}$.
In these terms, the \emph{Wiener-Itô chaos expansion for Poisson random
measures,} due to \cite{Ito}, states that every $\mathcal{F}_{T}$-measurable
random variable $F\in\mathcal{L}^{2}(\mathbb{P})$ admits a representation
\[
F=\sum_{n=0}^{\infty}I_{n}(f_{n})
\]
via a unique sequence of elements $f_{n}\in\widetilde{\mathcal{L}}_{T}^{2}((\mathrm{d}t\times\nu)^{n})$.
In virtue of this result, each random field $(X_{t,z})_{(t,z)\in[0,T]\times\mathbb{R}_{0}}$
has an expassion
\[
X_{t,z}=\sum_{n=0}^{\infty}I_{n}(f_{n}(\cdot,t,z)),\qquad f_{n}(\cdot,t,z)\in\widetilde{\mathcal{L}}_{T}^{2}((\mathrm{d}t\times\nu)^{n}),
\]
provided, of course, that $X_{t,z}$ is $\mathcal{F}_{T}$-measureble
with $\mathbb{E}[X_{t,z}^{2}]<\infty$ for all $(t,z)$ in $[0,T]\times\mathbb{R}_{0}$.
Now we are in position to define the \emph{Skorohod integral} and
the \emph{Malliavin derivative.}
\begin{defn*}
The random field $(X_{t,z})_{(t,z)\in[0,T]\times\mathbb{R}_{0}}$
\emph{belongs to} $Dom(\delta)$ if
\[
\sum_{n=0}^{\infty}(n+1)!\left\Vert \tilde{f}_{n}\right\Vert _{\mathcal{L}_{T}^{2}((\mathrm{d}t\times\nu)^{n})}^{2}<\infty
\]
and has \emph{Skorohod integral with respect to $\widetilde{N}$
\[
\delta(X)=\int_{0}^{T}\int_{\mathbb{R}_{0}}X_{t,z}\widetilde{N}(\delta t,\mathrm{d}z):=\sum_{n=0}^{\infty}I_{n+1}(\tilde{f}_{n}).
\]
}
\end{defn*}

\begin{defn*}
Let $\mathbb{D}_{1,2}$ be the \emph{stochastic Sobolev space} consisting
of all $\mathcal{F}_{T}$-measureble random variables $F\in\mathcal{L}^{2}(\mathbb{P})$
with chaos expansion $F=\sum_{n=0}^{\infty}I_{n}(f_{n})$ satisfying
\[
\left\Vert F\right\Vert _{\mathbb{D}_{1,2}}:=\sum_{n=1}^{\infty}(n)n!\left\Vert \tilde{f}_{n}\right\Vert _{\mathcal{L}_{T}^{2}((\mathrm{d}t\times\nu)^{n})}^{2}<\infty.
\]
For every $F\in\mathbb{D}_{1,2}$ its \emph{Malliavin derivative }is
defined as
\[
D_{t,z}F:=\sum_{n=0}^{\infty}nI_{n-1}(f_{n}(\cdot,t,z)).
\]

\end{defn*}
Let us mention here that $Dom(\delta)\subseteq\mathcal{L}^{2}(\mathbb{P}\times\mathrm{d}t\times\nu)$,
$\delta(X)\in\mathcal{L}^{2}(\mathbb{P})$, $\mathbb{D}_{1,2}\subset\mathcal{L}^{2}(\mathbb{P})$
and $DF\in\mathcal{L}^{2}(\mathbb{P}\times\mathrm{d}t\times\nu)$. 

We have the following theorems are central for the results below;
for their proof and more details we refer to \cite{dNOkPro} and references
therein.
\begin{thm*}
(\emph{Duality formula}) Let $X$ be Skorohod integrable and let $F\in\mathbb{D}_{1,2}$.
Then
\[
\mathbb{E}\left[F\int_{0}^{T}\int_{\mathbb{R}_{0}}X_{t,z}\widetilde{N}(\delta t,\mathrm{d}z)\right]=\mathbb{E}\left[\int_{0}^{T}\int_{\mathbb{R}_{0}}X_{t,z}D_{t,z}F\mathrm{d}t\nu(\mathrm{d}z)\right].
\]

\end{thm*}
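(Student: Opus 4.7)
The plan is to reduce the duality formula to the orthogonality identity for multiple integrals with respect to $\widetilde N$ by expanding both factors in chaos. Write $F = \sum_{n \ge 0} I_n(f_n)$ with symmetric kernels $f_n \in \widetilde{\mathcal{L}}_T^2((\mathrm{d}t\times\nu)^n)$, and expand the random field in its first $n$ arguments as $X_{t,z} = \sum_{n \ge 0} I_n(g_n(\cdot, t, z))$. By the definitions recalled in the excerpt, $\delta(X) = \sum_{n \ge 0} I_{n+1}(\tilde g_n)$ and $D_{t,z} F = \sum_{n \ge 1} n\, I_{n-1}(f_n(\cdot, t, z))$.

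I would first evaluate the left-hand side via the orthogonality identity
\begin{equation*}
\mathbb{E}[I_p(h)\, I_q(k)] = \delta_{p,q}\, p!\, \langle h, k\rangle_{\mathcal{L}_T^2((\mathrm{d}t\times\nu)^p)}, \qquad h,k \text{ symmetric},
\end{equation*}
which follows from the ordered-region definition of $I_n$ together with the isometry of $\widetilde N$. Only the pairings with $p = q = n+1$ survive, giving $\mathbb{E}[F\, \delta(X)] = \sum_{n \ge 0} (n+1)!\, \langle f_{n+1}, \tilde g_n\rangle$. Turning to the right-hand side, I would apply Fubini in $(t,z,\omega)$ and the same orthogonality pointwise in $(t,z)$: the only surviving cross term couples $I_n(g_n(\cdot, t, z))$ with $(n+1)\, I_n(f_{n+1}(\cdot, t, z))$, and reassembling the $(t,z)$-integral into the full inner product on $\mathcal{L}_T^2((\mathrm{d}t\times\nu)^{n+1})$ yields $\sum_{n \ge 0} (n+1)!\, \langle g_n, f_{n+1}\rangle$. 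Since $f_{n+1}$ is symmetric, the symmetrization is self-adjoint in this pairing, so $\langle g_n, f_{n+1}\rangle = \langle \tilde g_n, f_{n+1}\rangle$, and the two expressions agree term by term.

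The main obstacle is the careful justification of three interchanges: expectation with the infinite series on the left, Fubini between expectation and the $(t,z)$-integral on the right, and expectation with the resulting double series on the right. All three follow from the Cauchy--Schwarz inequality combined with the summability bounds built into $F \in \mathbb{D}_{1,2}$ and $X \in \mathrm{Dom}(\delta)$, which guarantee absolute convergence in $\mathcal{L}^1(\mathbb{P})$ and in $\mathcal{L}^1(\mathbb{P}\times\mathrm{d}t\times\nu)$, respectively. The orthogonality identity for iterated integrals against $\widetilde N$, and the elementary fact $\langle g, f\rangle = \langle \tilde g, f\rangle$ when $f$ is symmetric, are classical and may be cited from \cite{dNOkPro}.
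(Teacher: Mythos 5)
The paper does not actually prove this statement---it is quoted as a known result with a pointer to \cite{dNOkPro}---and your chaos-expansion argument is precisely the standard proof given in that reference: orthogonality of the multiple integrals reduces both sides to $\sum_{n\geq0}(n+1)!\,\langle f_{n+1},\tilde{g}_{n}\rangle$, the symmetrization being removable in the pairing because $f_{n+1}$ is symmetric. Your argument is correct, and the interchanges you flag are indeed justified by Cauchy--Schwarz against the defining summability conditions of $\mathbb{D}_{1,2}$ and $Dom(\delta)$.
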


\begin{thm*}
(Product rule) Let $F,G\in\mathbb{D}_{1,2}$ with $G$ bounded. Then
$FG\in\mathbb{D}_{1,2}$ and 
\[
D_{s,z}(FG)=FD_{s,z}G+GD_{s,z}F+D_{s,z}FD_{s,z}G,\qquad\mathrm{d}t\times\nu-a.e.
\]

\end{thm*}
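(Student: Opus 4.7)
The plan is to exploit the fact that, on the Poisson space, the Malliavin derivative defined via chaos expansions admits an alternative pathwise description as an \emph{add-a-jump} difference operator. This description trivialises the algebraic identity and explains, at a conceptual level, why the cross term $D_{s,z}F\,D_{s,z}G$ appears here, whereas it is absent in the Brownian setting.

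The first step is to recall (see \cite{dNOkPro}) that for every $F\in\mathbb{D}_{1,2}$ one has, $\mathbb{P}\times\mathrm{d}t\times\nu$-a.e.,
\[
D_{s,z}F = F(\omega+\delta_{(s,z)}) - F(\omega),
\]
where $\omega+\delta_{(s,z)}$ denotes the configuration obtained by adding a jump of size $z$ at time $s$ to the underlying Poisson sample path. Granted this identification, writing $F^{+}:=F(\omega+\delta_{(s,z)})$ and analogously $G^{+}$, we have
\[
D_{s,z}(FG) = F^{+}G^{+} - FG = (F+D_{s,z}F)(G+D_{s,z}G) - FG,
\]
and expanding the right-hand side yields
\[
D_{s,z}(FG) = F\,D_{s,z}G + G\,D_{s,z}F + D_{s,z}F\,D_{s,z}G,
\]
which is precisely the desired formula.

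The second step is to verify that $FG$ belongs to $\mathbb{D}_{1,2}$, so that the left-hand side is well defined through its chaos expansion. Since $G$ is bounded, say $|G|\leq C$, the same bound holds for $G^{+}$, and hence $|D_{s,z}G|\leq 2C$ pointwise. In particular $FG\in\mathcal{L}^{2}(\mathbb{P})$. Applying the triangle inequality to the product-rule expansion gives
\[
|D_{s,z}(FG)|\;\leq\;C\,|D_{s,z}F| + |F|\cdot|D_{s,z}G| + 2C\,|D_{s,z}F|,
\]
which, together with $F\in\mathbb{D}_{1,2}$ and $|D_{s,z}G|\leq 2C$, shows that $D(FG)\in\mathcal{L}^{2}(\mathbb{P}\times\mathrm{d}t\times\nu)$; upgrading this pathwise control to convergence of the associated chaos series follows from the closability of $D$ together with an approximation of $F$ by its partial chaos sums.

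The main obstacle is the first step, namely identifying the chaos-defined Malliavin derivative with the add-a-jump difference operator. A self-contained verification checks the identity on iterated integrals $F=I_{n}(f)$, where adding a jump amounts to one extra evaluation of the symmetric kernel $f$, and then extends by linearity and $\mathbb{D}_{1,2}$-density. Alternatively, one can stay entirely within the chaos framework by proving the product rule directly on pairs $(I_{m}(f),I_{n}(g))$ via the multiple-Poisson-integral product formula and extending by bilinearity and density; this route is combinatorially heavier but avoids invoking the pathwise representation altogether.
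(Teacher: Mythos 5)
First, a point of reference: the paper does not prove this statement. It is one of three auxiliary theorems (duality formula, product rule, chain rule) quoted verbatim, with the proofs explicitly deferred to \cite{dNOkPro}. So there is no internal argument to compare yours against, and your proposal has to stand on its own. Your route --- identifying $D_{s,z}$ with the add-one-jump difference operator $F\mapsto F(\omega+\delta_{(s,z)})-F(\omega)$, from which the algebraic identity $(F+D_{s,z}F)(G+D_{s,z}G)-FG=FD_{s,z}G+GD_{s,z}F+D_{s,z}FD_{s,z}G$ is immediate --- is the standard one (it is essentially the argument in \cite{SoleUtzetVives}), and the algebraic half of your write-up is fine modulo the identification itself, which you correctly flag as the real content: it is verified on iterated integrals $I_{n}(f)$ and extended by density and closability.

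The genuine gap is in your integrability step. You bound $|F|\cdot|D_{s,z}G|\le 2C|F|$ and conclude that $D(FG)\in\mathcal{L}^{2}(\mathbb{P}\times\mathrm{d}t\times\nu)$. But the function $(s,z)\mapsto 2C|F(\omega)|$ is constant in $(s,z)$, so its $\mathcal{L}^{2}(\mathrm{d}t\times\nu)$-norm is $2C|F|\sqrt{T\,\nu(\mathbb{R}_{0})}$, which is infinite whenever the L\'evy measure is infinite --- precisely the infinite-activity cases (Inverse Gaussian, CMY with $Y>0$) that the paper is built to handle. What is actually needed is $\mathbb{E}\bigl[F^{2}\int_{0}^{T}\int_{\mathbb{R}_{0}}(D_{s,z}G)^{2}\,\mathrm{d}s\,\nu(\mathrm{d}z)\bigr]<\infty$, and this does not follow from $F\in\mathcal{L}^{2}(\mathbb{P})$ and $G\in\mathbb{D}_{1,2}$ by Cauchy--Schwarz, since $F$ and $\Vert DG\Vert_{\mathcal{L}^{2}(\mathrm{d}t\times\nu)}$ are dependent and neither is assumed to have fourth moments. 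This membership claim, $FG\in\mathbb{D}_{1,2}$, is the non-trivial assertion of the theorem; the standard proofs establish it by first treating finite chaos expansions (via the product formula for multiple Poisson integrals), obtaining uniform $\mathbb{D}_{1,2}$-bounds, and only then invoking the closedness of $D$ --- the step you compress into a single clause at the end. As written, your estimate fails exactly for the Hazard processes of Section \ref{sec: Examples}.
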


\begin{thm*}
(Chain rule) Let $F\in\mathbb{D}_{1,2}$, and let $g$ be a continuous
function such that $g(F)\in\mathcal{L}^{2}(\mathbb{P})$ and $g(F+D_{s,z}F)\in\mathcal{L}^{2}(\mathbb{P}\times\mathrm{d}t\times\nu)$.
Then $g(F)\in\mathbb{D}_{1,2}$ and 
\[
D_{s,z}g(F)=g\left(F+D_{s,z}F\right)-g(F).
\]

\end{thm*}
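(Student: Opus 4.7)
My plan is a two-step strategy: first establish the identity for polynomial $g$ by an induction built on the product rule just proved, and then extend it to a general continuous $g$ by a density argument that exploits the closedness of $D$ on $\mathbb{D}_{1,2}$.

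For the polynomial step, I would start with $g(x)=x$ (trivial) and $g(x)=x^{2}$, where the product rule applied to $F\cdot F$ gives
\[
D_{s,z}F^{2}=2F\,D_{s,z}F+(D_{s,z}F)^{2}=(F+D_{s,z}F)^{2}-F^{2}.
\]
Assuming inductively that $D_{s,z}F^{n}=(F+D_{s,z}F)^{n}-F^{n}$, the product rule applied to the pair $(F,F^{n})$ together with a short algebraic rearrangement yields $D_{s,z}F^{n+1}=(F+D_{s,z}F)^{n+1}-F^{n+1}$, and linearity then extends the identity to every polynomial. Since the product rule as stated requires one factor to be bounded, I would first carry out the induction for bounded $F$, and pass to the limit using the closedness of $D$; this truncation step is routine.

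For the approximation step I would choose polynomials $p_{n}$ converging to $g$ uniformly on compact sets (Weierstrass). Using the hypotheses $g(F)\in\mathcal{L}^{2}(\mathbb{P})$ and $g(F+D_{s,z}F)\in\mathcal{L}^{2}(\mathbb{P}\times\mathrm{d}t\times\nu)$ together with a truncation and uniform-integrability argument, I can arrange that $p_{n}(F)\rightarrow g(F)$ in $\mathcal{L}^{2}(\mathbb{P})$ and $p_{n}(F+D_{s,z}F)\rightarrow g(F+D_{s,z}F)$ in $\mathcal{L}^{2}(\mathbb{P}\times\mathrm{d}t\times\nu)$. From the polynomial step,
\[
D_{s,z}p_{n}(F)=p_{n}(F+D_{s,z}F)-p_{n}(F),
\]
and the right-hand side converges in $\mathcal{L}^{2}(\mathbb{P}\times\mathrm{d}t\times\nu)$ to $g(F+D_{s,z}F)-g(F)$. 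Since the operator $D:\mathbb{D}_{1,2}\rightarrow\mathcal{L}^{2}(\mathbb{P}\times\mathrm{d}t\times\nu)$ is closed (it is assembled from the orthogonal chaos expansion), I conclude that $g(F)\in\mathbb{D}_{1,2}$ with Malliavin derivative equal to the claimed expression.

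The main obstacle I expect lies precisely in this approximation step: upgrading compact-uniform approximation $p_{n}\to g$ to genuine $\mathcal{L}^{2}$ convergence along both the law of $F$ and the joint law of $F+D_{s,z}F$ under $\mathbb{P}\times\mathrm{d}t\times\nu$. The hypotheses on $g$ are tailored for exactly this purpose: they provide integrable envelopes, so truncating $g$ at level $R$, approximating the truncation by polynomials on $[-R,R]$, and controlling the tails $\|g(F)\mathbf{1}_{\{|F|>R\}}\|_{\mathcal{L}^{2}(\mathbb{P})}$ and $\|g(F+D_{s,z}F)\mathbf{1}_{\{|F+D_{s,z}F|>R\}}\|_{\mathcal{L}^{2}(\mathbb{P}\times\mathrm{d}t\times\nu)}$, both tending to $0$ as $R\to\infty$, will yield the required $\mathcal{L}^{2}$-convergence. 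Once this is in place, the closedness of $D$ seals the identity.
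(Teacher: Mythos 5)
First, a point of reference: the paper does not prove this statement at all --- it is quoted as a known result, with the proof deferred to \cite{dNOkPro} --- so there is no in-paper argument to compare yours against. Your overall strategy (polynomial case via the product rule, then closedness of $D$ plus polynomial approximation) is the standard textbook route, and the polynomial induction itself is sound \emph{for bounded} $F$: there the identity $D_{s,z}F^{n}=(F+D_{s,z}F)^{n}-F^{n}$ follows exactly as you say, $F+D_{s,z}F$ inherits the bound (clear from the difference-operator picture mentioned below), so uniform convergence of the Weierstrass approximants on the essential range gives the required $\mathcal{L}^{2}$-convergences, and closedness of $D$ finishes that case.

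The genuine gap is the passage to unbounded $F$, which you dismiss as ``routine'' in the polynomial step and as a ``truncation and uniform-integrability argument'' in the approximation step; neither works as described. First, to reduce to bounded $F$ you would replace $F$ by $\psi_{R}(F)$ for a cutoff $\psi_{R}$, but computing $D\psi_{R}(F)$ is itself an instance of the chain rule for a non-polynomial function --- the very statement being proved --- so the reduction is circular. Second, working directly with unbounded $F$, the hypotheses $g(F)\in\mathcal{L}^{2}(\mathbb{P})$ and $g(F+D_{s,z}F)\in\mathcal{L}^{2}(\mathbb{P}\times\mathrm{d}t\times\nu)$ provide integrable envelopes for $g$, not for the approximating polynomials $p_{n}$: outside the compact set on which $p_{n}$ approximates $g$ (or its truncation), $p_{n}$ is unbounded and in no way dominated by $|g|$, so $\left\Vert p_{n}(F)\mathbf{1}_{\{|F|>R\}}\right\Vert _{\mathcal{L}^{2}(\mathbb{P})}$ is not controlled by the tail of $g(F)$, and the claimed convergence $p_{n}(F)\to g(F)$ in $\mathcal{L}^{2}(\mathbb{P})$ does not follow. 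Worse, for a general $F\in\mathbb{D}_{1,2}$ (which is only square-integrable) the powers $F^{n}$ need not even lie in $\mathcal{L}^{2}(\mathbb{P})$, so the polynomial identity cannot be formulated for such $F$ in the first place. The clean way to close the argument --- implicit in the canonical-space reference \cite{SoleUtzetVives} that the paper also cites --- is to use the characterization of $D_{s,z}$ as the ``add one jump at $(s,z)$'' difference operator, $D_{s,z}F(\omega)=F(\omega_{(s,z)})-F(\omega)$, together with the fact that $\mathbb{D}_{1,2}$ consists exactly of those $F$ for which this difference lies in $\mathcal{L}^{2}(\mathbb{P}\times\mathrm{d}t\times\nu)$; then $D_{s,z}g(F)=g(F(\omega_{(s,z)}))-g(F(\omega))=g(F+D_{s,z}F)-g(F)$ is immediate, and the two integrability hypotheses on $g$ are precisely what is needed to conclude $g(F)\in\mathbb{D}_{1,2}$.
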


\subsubsection{A recursive formula}
\begin{lem}
\label{lem: the Malliavin lemma}For every deterministic Skorohod
integrable function $f$ and non-negative integer $n$, define
\[
F:=\int_{0}^{T}\int_{\mathbb{R}_{0}}f(s,z)\widetilde{N}(\mathrm{d}s,\mathrm{d}z),\qquad\text{and}\qquad X_{n}:=F^{n}\mathrm{e}^{-F}.
\]
If $Y\in\mathbb{D}_{1,2}$ is bounded, then the Malliavin derivative
of $YX_{n}$ is given ($\mathrm{d}t\times\nu-$a.e.) by
\[
D_{s,z}(YX_{n})=\mathrm{e}^{-f(s,z)}\left(Y+D_{s,z}Y\right)\left(\sum_{k=0}^{n}\binom{n}{k}X_{n-k}f^{k}(s,z)\right)-YX_{n}.
\]
\end{lem}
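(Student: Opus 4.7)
The plan is to compute $D_{s,z}(YX_n)$ by first handling the scalar factor $X_n=F^n\mathrm{e}^{-F}$ via the chain rule for the Malliavin derivative, then combining with $Y$ through the product rule, and finally simplifying.

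First, I would observe that since $f$ is deterministic and Skorohod integrable, $F$ is in fact an It\^o integral against $\widetilde{N}$, so its Malliavin derivative is the easy case
\[
D_{s,z}F=f(s,z),\qquad \mathrm{d}t\times\nu-\text{a.e.}
\]
The integrability hypothesis on the L\'evy measure (exponential moments near $0$) guarantees that $F$ and all its polynomial-exponential functionals lie in $\mathbb{D}_{1,2}$, so that the chain and product rules quoted above apply to $X_n=g(F)$ with $g(x)=x^n\mathrm{e}^{-x}$.

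Next, I would apply the chain rule to $g(F)$:
\[
D_{s,z}X_n=g(F+f(s,z))-g(F)=(F+f(s,z))^n\,\mathrm{e}^{-(F+f(s,z))}-F^n\mathrm{e}^{-F}.
\]
Expanding $(F+f(s,z))^n$ by the binomial theorem and factoring out $\mathrm{e}^{-f(s,z)}$ in front of $\mathrm{e}^{-F}$ turns each summand into $X_{n-k}=F^{n-k}\mathrm{e}^{-F}$, yielding
\[
D_{s,z}X_n=\mathrm{e}^{-f(s,z)}\sum_{k=0}^{n}\binom{n}{k}X_{n-k}f^{k}(s,z)-X_n.
\]

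Now I would apply the product rule to $YX_n$ (using that $Y$ is bounded and both factors are in $\mathbb{D}_{1,2}$):
\[
D_{s,z}(YX_n)=Y\,D_{s,z}X_n+X_n\,D_{s,z}Y+D_{s,z}Y\,D_{s,z}X_n=(Y+D_{s,z}Y)\,D_{s,z}X_n+X_n\,D_{s,z}Y.
\]
Substituting the expression for $D_{s,z}X_n$ derived above and distributing, the $-X_n$ term contributes $-(Y+D_{s,z}Y)X_n$, whose $-D_{s,z}Y\,X_n$ part cancels exactly against the residual $+X_n\,D_{s,z}Y$. What remains is precisely
\[
D_{s,z}(YX_n)=\mathrm{e}^{-f(s,z)}(Y+D_{s,z}Y)\sum_{k=0}^{n}\binom{n}{k}X_{n-k}f^{k}(s,z)-YX_n,
\]
which is the claimed identity. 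The main subtlety (rather than obstacle) is checking the cancellation of the $D_{s,z}Y$ contributions at the end and ensuring the regularity conditions required by the product and chain rules; both are immediate from the boundedness of $Y$, the deterministic nature of $f$, and the exponential-moment assumption on $\nu$.
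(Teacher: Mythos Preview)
Your proof is correct and uses the same two tools (the Malliavin product rule and chain rule) as the paper, but your decomposition is cleaner. The paper splits $X_n=F^n\cdot\mathrm{e}^{-F}$ first, applying the product rule to the pair $(Y\mathrm{e}^{-F},F^n)$ and then again to $(Y,\mathrm{e}^{-F})$, together with two separate chain-rule evaluations $D_{s,z}\mathrm{e}^{-F}=\mathrm{e}^{-F}(\mathrm{e}^{-f}-1)$ and $D_{s,z}F^n=(F+f)^n-F^n$; the final simplification then requires collapsing several cross terms. You instead treat $X_n=g(F)$ with $g(x)=x^n\mathrm{e}^{-x}$ as a single functional, invoke the chain rule once to get $D_{s,z}X_n=g(F+f)-g(F)$, and then apply the product rule once to $Y\cdot X_n$. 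This buys you a shorter computation and a transparent cancellation of the $X_nD_{s,z}Y$ term; it also sidesteps the mild awkwardness in the paper's first product-rule step, where neither factor $Y\mathrm{e}^{-F}$ nor $F^n$ is obviously bounded. Both routes arrive at the same identity.
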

\begin{proof}
By the product rule we have
\[
D_{s,z}(YX_{n})=\left(D_{s,z}(Y\mathrm{e}^{-F})\right)\left(F^{n}+D_{s,z}F^{n}\right)+Y\mathrm{e}^{-F}D_{s,z}F^{n},\qquad\mathrm{d}t\times\nu-a.e,
\]
and
\[
D_{s,z}(Y\mathrm{e}^{-F})=YD_{s,z}\mathrm{e}^{-F}+(D_{s,z}Y)\left(\mathrm{e}^{-F}+D_{s,z}\mathrm{e}^{-F}\right),\qquad\mathrm{d}t\times\nu-a.e.
\]
Moreover, since $D_{s,z}F=f(s,z)$, then an application of the chain
rule tells us that $D_{s,z}\mathrm{e}^{-F}=\mathrm{e}^{-F}(\mathrm{e}^{-f(s,z)}-1)$
and 
\[
D_{s,z}F{}^{n}=(F+D_{s,z}F)^{n}-F^{n}=(F+f(s,z))^{n}-F^{n}=\sum_{k=0}^{n-1}\binom{n}{k}F^{k}f^{n-k}(s,z).
\]
Combining these expressions we get
\begin{align*}
D_{s,z}YX_{n} & =\left(Y\mathrm{e}^{-F}(\mathrm{e}^{-f(s,z)}-1)+(D_{s,z}Y)\left(\mathrm{e}^{-F}+\mathrm{e}^{-F}(\mathrm{e}^{-f(s,z)}-1)\right)\right)\sum_{k=0}^{n}\binom{n}{k}F^{k}f^{n-k}(s,z)\\
 & \quad+Y\mathrm{e}^{-F}\sum_{k=0}^{n-1}\binom{n}{k}F^{k}f^{n-k}(s,z)\\
 & =Y\mathrm{e}^{-F}\left((\mathrm{e}^{-f(s,z)}-1)\sum_{k=0}^{n}\binom{n}{k}F^{k}f^{n-k}(s,z)+\sum_{k=0}^{n-1}\binom{n}{k}F^{k}f^{n-k}(s,z)\right)\\
 & \quad+(D_{s,z}Y)\mathrm{e}^{-(F+f(s,z))}\sum_{k=0}^{n}\binom{n}{k}F^{k}f^{n-k}(s,z)\\
 & =Y\mathrm{e}^{-F}\left(\mathrm{e}^{-f(s,z)}\sum_{k=0}^{n}\binom{n}{k}F^{k}f^{n-k}(s,z)^{n-k}-F^{n}\right)\\
 & \quad+(D_{s,z}Y)\mathrm{e}^{-(F+f(s,z))}\sum_{k=0}^{n}\binom{n}{k}F^{k}f^{n-k}(s,z).
\end{align*}
Thus, rewritting the last equivalence in terms of $X_{1},...,X_{n}$,
we get the result.\end{proof}
\begin{lem}
\label{lem: The-conditional-characteristic}The conditional characteristic
function of such Hazard processes in (\ref{eq:abscont-1}),
\[
\varphi(u;t,T):=\mathbb{E}\left[\left.\mathrm{e}^{\mathrm{\mathrm{i}}u\Lambda_{T}}\right|\mathcal{F}_{t}\right],
\]
is given by
\begin{equation}
\varphi(u;t,T)=\exp\left\{ \mathrm{\mathrm{i}}u\Lambda_{t}+\int_{t}^{T}\int_{\mathbb{R}_{0}}\left[\mathrm{e}^{\mathrm{\mathrm{i}}u\sigma(s,z)}-1-\mathrm{\mathrm{i}}u\sigma(s,z)\right]\mathrm{d}s\nu(\mathrm{d}z)\right\} .\label{eq: characteristic exponent}
\end{equation}
\end{lem}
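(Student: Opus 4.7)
The strategy is to split $\Lambda_T$ into its $\mathcal{F}_t$-measurable past and its increment over $(t,T]$, exploit independence, and then recognize the characteristic functional of a deterministic integrand against a compensated Poisson random measure.

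More precisely, write
\[
\Lambda_T = \Lambda_t + \int_t^T\!\!\int_{\mathbb{R}_0} \sigma(s,z)\,\widetilde N(\mathrm{d}s,\mathrm{d}z) =: \Lambda_t + J_{t,T}.
\]
Because $\sigma$ is deterministic and $\widetilde N$ is the compensated jump measure of a L\'evy process with independent increments, the random variable $J_{t,T}$ is independent of $\mathcal F_t$. Since $\Lambda_t$ is $\mathcal F_t$-measurable, the conditional characteristic function factorises as
\[
\varphi(u;t,T) \;=\; \mathrm e^{\mathrm i u\Lambda_t}\,\mathbb E\bigl[\exp\{\mathrm i u J_{t,T}\}\bigr].
\]
This reduces the problem to computing the unconditional characteristic function of $J_{t,T}$.

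For the second factor I would invoke the standard L\'evy--Khintchine-type identity for Wiener integrals with deterministic kernel against $\widetilde N$:
\[
\mathbb E\!\left[\exp\!\left\{\mathrm i u\!\int_t^T\!\!\int_{\mathbb{R}_0}\!\sigma(s,z)\,\widetilde N(\mathrm d s,\mathrm d z)\right\}\right] \;=\; \exp\!\left\{\int_t^T\!\!\int_{\mathbb{R}_0}\!\bigl[\mathrm e^{\mathrm i u\sigma(s,z)} - 1 - \mathrm i u\sigma(s,z)\bigr]\mathrm d s\,\nu(\mathrm d z)\right\}.
\]
This is proved in the usual way by approximation: first verify it for a simple function $\sigma = \sum_j c_j \mathbf 1_{(t_{j-1},t_j]\times A_j}$ with disjoint supports, where $\widetilde N((t_{j-1},t_j]\times A_j)$ are independent compensated Poisson variables with known characteristic functions $\exp\{(t_j-t_{j-1})\nu(A_j)(\mathrm e^{\mathrm i u c_j} - 1 - \mathrm i u c_j)\}$; then pass to the limit using integrability of $\sigma$ with respect to $\widetilde N$ together with the bound $|\mathrm e^{\mathrm i u \sigma} - 1 - \mathrm i u \sigma| \le \tfrac{1}{2}u^2 \sigma^2$ to justify dominated convergence inside the integral and continuity of characteristic functions for the outer limit. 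Plugging this identity back into the factorisation yields exactly (\ref{eq: characteristic exponent}).

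The only delicate step is the approximation argument that establishes the exponential formula for general $\sigma$ integrable against $\widetilde N$; all other steps (the split, the independence, the factorisation) are immediate from the structure of L\'evy processes with respect to their natural filtration. Since the paper already assumes $\sigma$ deterministic and $\widetilde N$-integrable, and since this exponential formula is classical (see \cite{Applebaum,Sato}), a short proof can simply cite it rather than redo the approximation in full detail.
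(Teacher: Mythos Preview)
Your proof is correct and follows the same overall route as the paper: split $\Lambda_T=\Lambda_t+(\Lambda_T-\Lambda_t)$, use independence of the increment from $\mathcal F_t$ (since $\sigma$ is deterministic) to factorise, and then identify the characteristic function of the increment via the exponential formula. The only difference lies in how that last identity is justified: the paper does not argue by simple-function approximation but instead observes that the Dol\'eans--Dade stochastic exponential
\[
\mathcal E_t(f)=\exp\!\left\{\int_0^t\!\!\int_{\mathbb R_0} f\,\widetilde N(\mathrm d s,\mathrm d z)-\int_0^t\!\!\int_{\mathbb R_0}\bigl[\mathrm e^{f}-1-f\bigr]\mathrm d s\,\nu(\mathrm d z)\right\}
\]
is a martingale, so $\mathbb E[\mathcal E_T(f)]=1$ with $f=\mathrm i u\,\mathbf 1_{[t,T]}\sigma$ gives the formula in one line. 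Both arguments are standard and equally valid; the martingale route is slightly slicker here because it avoids any limiting procedure, while your approximation argument is more self-contained and makes the integrability hypothesis on $\sigma$ visibly relevant.
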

\begin{proof}
Since the integrands $\mu$ and $\sigma$ are deterministic, then
the increment $\Lambda_{T}-\Lambda_{t}$ is independent of $\mathcal{F}_{t}$
and
\[
\mathbb{E}\left[\left.\mathrm{e}^{\mathrm{\mathrm{i}}u\Lambda_{T}}\right|\mathcal{F}_{t}\right]=\exp\left\{ \mathrm{\mathrm{i}}u\Lambda_{t}\right\} \mathbb{E}\left[\mathrm{e}^{\mathrm{\mathrm{i}}u(\Lambda_{T}-\Lambda_{t})}\right].
\]
Notice that for every deterministic function $f$ the process $(\mathcal{E}_{t}(f))_{t\geq0}$
defined by 
\[
\mathcal{E}_{t}(f):=\exp\left\{ \int_{0}^{t}\int_{\mathbb{R}_{0}}f(s,z)\widetilde{N}(\mathrm{d}s,\mathrm{d}z)-\int_{0}^{t}\int_{\mathbb{R}_{0}}\left[\mathrm{e}^{f(s,z)}-1-f(s,z)\right]\mathrm{d}s\nu(\mathrm{d}z)\right\} 
\]
is a Dol\'eans-Dade exponental martingale. Thus $\mathbb{E}[\mathcal{E}_{T}(f)]=1$,
and so 
\[
\mathbb{E}\left[\mathcal{E}_{T}(f)\mathrm{e}^{\int_{0}^{T}\int_{\mathbb{R}_{0}}\left[\mathrm{e}^{f(s,z)}-1-f(s,z)\right]\mathrm{d}s\nu(\mathrm{d}z)}\right]=\mathrm{e}^{\int_{0}^{T}\int_{\mathbb{R}_{0}}\left[\mathrm{e}^{f(s,z)}-1-f(s,z)\right]\mathrm{d}s\nu(\mathrm{d}z)}.
\]
In our case this reads as 
\begin{align*}
\mathbb{E}\left[\mathrm{e}^{\mathrm{\mathrm{i}}u(\Lambda_{T}-\Lambda_{t})}\right] & =\mathbb{E}\left[\exp\left\{ \int_{0}^{T}\int_{\mathbb{R}_{0}}\mathrm{\mathrm{i}}u\mathbf{1}_{[t,T]}(s)\sigma(s,z)\widetilde{N}(\mathrm{d}s,\mathrm{d}z)\right\} \right]\\
 & =\exp\left\{ \int_{0}^{T}\int_{\mathbb{R}_{0}}\left[\mathrm{e}^{\mathrm{\mathrm{i}}u\mathbf{1}_{[t,T]}(s)\sigma(s,z)}-1-\mathrm{\mathrm{i}}u\mathbf{1}_{[t,T]}(s)\sigma(s,z)\right]\mathrm{d}s\nu(\mathrm{d}z)\right\} .
\end{align*}
It remains to notice that $\mathrm{e}^{\mathrm{\mathrm{i}}u\mathbf{1}_{[t,T]}\sigma}-1-\mathrm{\mathrm{i}}u\mathbf{1}_{[t,T]}\sigma=\left[\mathrm{e}^{\mathrm{\mathrm{i}}u\sigma}-1-\mathrm{\mathrm{i}}u\sigma\right]\mathbf{1}_{[t,T]}$.\end{proof}
\begin{lem}
\label{lem: recursive lemma}Under the notation of Lemma \ref{lem: the Malliavin lemma}
we have 
\[
\mathbb{E}\left[X_{0}\right]=\exp\left\{ \int_{0}^{T}\int_{\mathbb{R}_{0}}\left[\mathrm{e}^{-f(s,z)}-1+f(s,z)\right]\mathrm{d}s\nu(\mathrm{d}z)\right\} ,
\]
and for $n\geq1$ the following recursive formula holds true 
\begin{align*}
\mathbb{E}\left[X_{n+1}\right] & =\mathbb{E}[X_{n}]\int_{0}^{T}\int_{\mathbb{R}_{0}}(\mathrm{e}^{-f(s,z)}-1)f(s,z)\mathrm{d}s\nu(\mathrm{d}z)\\
 & \quad+\sum_{k=1}^{n}\binom{n}{k}\mathbb{E}[X_{n-k}]\int_{0}^{T}\int_{\mathbb{R}_{0}}\mathrm{e}^{-f(s,z)}f^{k+1}(s,z)\mathrm{d}s\nu(\mathrm{d}z).
\end{align*}
\end{lem}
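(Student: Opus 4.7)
The plan has two parts: compute $\mathbb{E}[X_0]$ directly from the Dol\'eans--Dade exponential machinery already invoked in Lemma \ref{lem: The-conditional-characteristic}, and derive the recursion for $\mathbb{E}[X_{n+1}]$ by combining the observation $X_{n+1} = FX_n$ with the Malliavin duality formula and the derivative computation in Lemma \ref{lem: the Malliavin lemma}.

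For the initial value $\mathbb{E}[X_0] = \mathbb{E}[e^{-F}]$, I would set $g = -f$ in the exponential martingale $(\mathcal{E}_t(g))_{t\geq0}$ used in the proof of Lemma \ref{lem: The-conditional-characteristic}. Since $\mathbb{E}[\mathcal{E}_T(-f)]=1$, rearranging gives
\[
\mathbb{E}[e^{-F}] = \exp\left\{\int_0^T\int_{\mathbb{R}_0}\left[e^{-f(s,z)}-1+f(s,z)\right]\mathrm{d}s\,\nu(\mathrm{d}z)\right\},
\]
which is exactly the stated formula for $\mathbb{E}[X_0]$. (Equivalently, one can read this off Lemma \ref{lem: The-conditional-characteristic} at $u=\mathrm{i}$ and $t=0$.)

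For the recursion, I would write $X_{n+1}=FX_n$, so that $\mathbb{E}[X_{n+1}]=\mathbb{E}[FX_n]$. Since $f$ is deterministic, $F$ coincides with the Skorohod integral $\int_0^T\int_{\mathbb{R}_0} f(s,z)\widetilde N(\delta s,\mathrm{d}z)$, and the duality formula yields
\[
\mathbb{E}[FX_n] = \int_0^T\int_{\mathbb{R}_0} f(s,z)\,\mathbb{E}[D_{s,z}X_n]\,\mathrm{d}s\,\nu(\mathrm{d}z).
\]
Applying Lemma \ref{lem: the Malliavin lemma} with the (trivially bounded) choice $Y\equiv 1$, so that $D_{s,z}Y=0$, gives
\[
D_{s,z}X_n = e^{-f(s,z)}\sum_{k=0}^n\binom{n}{k}X_{n-k}f^k(s,z)-X_n.
\]
Taking expectations and inserting this into the duality identity produces
\[
\mathbb{E}[X_{n+1}] = \sum_{k=0}^n\binom{n}{k}\mathbb{E}[X_{n-k}]\int_0^T\!\!\int_{\mathbb{R}_0} e^{-f(s,z)}f^{k+1}(s,z)\,\mathrm{d}s\,\nu(\mathrm{d}z) - \mathbb{E}[X_n]\int_0^T\!\!\int_{\mathbb{R}_0} f(s,z)\,\mathrm{d}s\,\nu(\mathrm{d}z).
\]
Finally, I would isolate the $k=0$ summand and combine it with the last (negative) term; using $e^{-f}f-f=(e^{-f}-1)f$ collapses the two into $\mathbb{E}[X_n]\int(e^{-f}-1)f\,\mathrm{d}s\,\nu(\mathrm{d}z)$, while the remaining $k=1,\dots,n$ terms match the stated recursion term by term.

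The main obstacle is the bookkeeping of Malliavin regularity needed to justify each step: specifically, that $X_n\in\mathbb{D}_{1,2}$, that $F\in\mathrm{Dom}(\delta)$, and that $(s,z)\mapsto f(s,z)\,\mathbb{E}[D_{s,z}X_n]$ is integrable against $\mathrm{d}s\,\nu(\mathrm{d}z)$ so that Fubini can be used to pass the expectation inside the duality pairing. Under the exponential-moment assumption on $\nu$ stated at the beginning of Section \ref{sec: Proofs}, $F$ has finite moments of every order and $e^{-F}\le 1$, so $X_n=F^ne^{-F}\in L^p(\mathbb{P})$ for all $p$; together with the deterministic character of $f$ this suffices, but the verification should be carried out explicitly to legitimize the interchange.
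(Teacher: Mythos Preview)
Your proposal is correct and follows essentially the same route as the paper: the base case is read off the Dol\'eans--Dade/characteristic-function computation of Lemma~\ref{lem: The-conditional-characteristic}, and the recursion comes from writing $X_{n+1}=FX_n$, applying the duality formula, and invoking Lemma~\ref{lem: the Malliavin lemma} with $Y\equiv1$. Your explicit isolation of the $k=0$ term and your remarks on the Malliavin regularity needed (membership in $\mathbb{D}_{1,2}$, Fubini) go slightly beyond what the paper spells out, but the argument is the same.
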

\begin{proof}
The Lemma \ref{cor: char func} provides the the base case ($n=0$).
For $n\geq1$, notice that
\begin{align*}
\mathbb{E}[X_{n+1}] & =\mathbb{E}[FX_{n}]\\
 & =\mathbb{E}\left[\int_{0}^{T}\int_{\mathbb{R}_{0}}f(s,z)D_{s,z}X_{n}\mathrm{d}s\nu(\mathrm{d}z)\right]\\
 & =\mathbb{E}\left[\int_{0}^{T}\int_{\mathbb{R}_{0}}f(s,z)\left(\mathrm{e}^{-f(s,z)}\sum_{k=0}^{n}\binom{n}{k}X_{n-k}f^{k}(s,z)-X_{n}\right)\mathrm{d}s\nu(\mathrm{d}z)\right],
\end{align*}
where the second line follows from the duality formula, and the last
one from Lemma \ref{lem: the Malliavin lemma} by setting $Y=1$.
The result then follows by the linearity of the expectation.
\end{proof}

\subsubsection{Proof of Theorem \ref{thm: the recursive formula}}

Notice that (\ref{eq: n-th jump survival probability}) can be rewritten
as 
\begin{align*}
\mathbb{P}(\tau_{n}>T|\mathcal{F}_{t})= & \sum_{k=0}^{n-1}\frac{1}{k!}\mathbb{E}\left[\left.\Lambda_{T}^{k}\mathrm{e}^{-\Lambda_{T}}\right|\mathcal{F}_{t}\right]\\
= & \mathbf{1}_{\{\tau_{n}>t\}}\mathrm{e}^{\Lambda_{t}}\left(\sum_{k=0}^{n-1}\sum_{j=0}^{k}\frac{\Lambda_{t}^{j}}{j!(k-j)!}\mathbb{E}\left[\left.(\Lambda_{T}-\Lambda_{t})^{k-j}\mathrm{e}^{-(\Lambda_{T}-\Lambda_{t})}\right|\mathcal{F}_{t}\right]\right)
\end{align*}
Indeed, it suffices to expand the factor
\[
\Lambda_{T}^{k}=([\Lambda_{T}-\Lambda_{t}]+\Lambda_{t})^{k}=\sum_{j=0}^{k}\binom{k}{j}(\Lambda_{T}-\Lambda_{t})^{k-j}\Lambda_{t}^{j},
\]
and use that $\Lambda_{t}$ is $\mathcal{F}_{t}$-measurable. Now,
since the integrand in (\ref{eq: Levy driven Hazard process-1}) is
deterministic, we have that the increment $\Lambda_{T}-\Lambda_{t}$
is independent of $\mathcal{F}_{t}$ and thus 
\[
\mathbb{E}\left[\left.\left(\Lambda_{T}-\Lambda_{t}\right)^{k-j}\mathrm{e}^{-(\Lambda_{T}-\Lambda_{t})}\right|\mathcal{F}_{t}\right]=\mathbb{E}\left[\left(\Lambda_{T}-\Lambda_{t}\right)^{k-j}\mathrm{e}^{-(\Lambda_{T}-\Lambda_{t})}\right]=m_{k-j}.
\]
Applying Lemma \ref{lem: recursive lemma} with $f(s,z):=\mathbf{1}_{[t,T]}(s)\sigma(s,z)$
we show that the quantities $m_{0},m_{1},...,m_{n}$ satisfy the recursion
claimed. In order to remove factor $\mathbf{1}_{[t,T]}(s)$ from the
expression, it remains to take into account the basic identities
\[
\int_{0}^{T}\int_{\mathbb{R}_{0}}(\mathrm{e}^{-\mathbf{1}_{[t,T]}(s)\sigma(s,z)}-1)\mathbf{1}_{[t,T]}(s)\sigma(s,z)\mathrm{d}s\nu(\mathrm{d}z)=\int_{t}^{T}\int_{\mathbb{R}_{0}}(\mathrm{e}^{-\sigma(s,z)}-1)\sigma(s,z)\mathrm{d}s\nu(\mathrm{d}z),
\]
and
\[
\int_{0}^{T}\int_{\mathbb{R}_{0}}\mathrm{e}^{-\mathbf{1}_{[t,T]}(s)\sigma(s,z)}\mathbf{1}_{[t,T]}(s)\sigma^{n-k+1}(s,z)\mathrm{d}s\nu(\mathrm{d}z)=\int_{t}^{T}\int_{\mathbb{R}_{0}}\mathrm{e}^{-\sigma(s,z)}\sigma^{n-k+1}(s,z)\mathrm{d}s\nu(\mathrm{d}z).
\]

\end{document}